\newcommand{\addresseshere}{%
  \enddoc@text\let\enddoc@text\relax
}
\def\F{\mathbf F}
\def\cC{\mathcal C}
\def\cI{\mathcal I}
\def\cS{\mathcal S}
\def\cU{\mathcal U}
\def\cV{\mathcal V}
\def\Sym{\mbox{\rm Sym}}
\def\dim{\mbox{\rm dim}}
\def\FF{{\mathbb F}}
\def\f2{{\mathbb F}_{2}}
\newcommand{\AGL}{\mbox{\rm AGL}}
\newcommand{\GL}{\mbox{\rm GL}}
\newcommand{\ga}{\alpha}
\newcommand{\g}{\gamma}
\newcommand{\gd}{\delta}
\newcommand{\gk}{M}
\newcommand{\gs}{\sigma}
\newcommand{\gt}{\tau}
\newcommand{\be}{{e}}
\newcommand{\bb}{{b}}
\newcommand{\bB}{B}
\newcommand{\Span}{{\mathrm{Span}}}
\def\F2{\mathbb{F}_{\hspace{-0.7mm}2}}
\def\Fp{\mathbb{F}_{\hspace{-0.7mm}p}}
\newcommand\deq{\mathrel{\stackrel{\makebox[0pt]{\mbox{\normalfont\tiny def}}}{=}}}
\newcommand{\T}{\mathcal T}
\newtheorem{theorem}{Theorem}[section]
\newtheorem{lemma}[theorem]{Lemma}
\newtheorem{proposition}[theorem]{Proposition}
\newtheorem{corollary}[theorem]{Corollary}
\theoremstyle{remark}
\newtheorem{remark}[theorem]{Remark}
\newtheorem{example}[theorem]{Example}
\begin{document}

\begin{frontmatter}

\title{On properties of translation groups in the affine general linear group with applications to cryptography\tnoteref{t1}}

\tnotetext[t1]{{This research was partially funded by the Italian Ministry of Education, 
Universities and Research (MIUR), with the project PRIN 2015TW9LSR ``Group theory and applications".
Roberto Civino is partially funded by  the Centre  of Excellence
  EX-EMERGE at University of L'Aquila.}}

\author[no]{Marco Calderini}
\ead{marco.calderini@uib.no}

\author[ro]{Roberto Civino}
\ead{roberto.civino@univaq.it}

\author[tn]{Massimiliano Sala}
\ead{massimiliano.sala@unitn.it}

%\cortext[cor1]{The authors are members of INdAM-GNSAGA, Italy.}
%\cortext[cor2]{Principal corresponding author}
%\fntext[fn1]{The author thankfully acknowledges support by the
  %Department of Mathematics, University of Trento.}
%\fntext[fn2]{The author thankfully acknowledges support by the
  %Department of Mathematics and Applications, University of Milano-Bicocca and MIUR-%Italy via PRIN `Group theory and applications''.}
%\fntext[fn3]{Yet another author footnote. Indeed, you can have
%any number of author footnotes.}
\address[no]{Department of Informatics, University of Bergen, Norway}
\address[ro]{DISIM, University of l'Aquila, Italy}
\address[tn]{Department of Mathematics, University of Trento, Italy}

\begin{abstract}
The affine general linear group acting on a vector space over a prime field is a well-understood mathematical object. Its elementary abelian regular subgroups have recently drawn attention in applied mathematics thanks to their use in cryptography as a way to hide or detect weaknesses inside block ciphers. This paper is focused on building a convenient representation of their elements which suits better the purposes of the cryptanalyst. Several combinatorial counting formulas and a classification of their conjugacy classes are given as well. 
\end{abstract}

\begin{keyword}
Translation group, affine group, block ciphers, cryptanalysis.
\end{keyword}

\end{frontmatter}

\section{Introduction}

The group of the translations of a vector space over a prime field is an elementary abelian regular subgroup of the corresponding symmetric group, and its normaliser, 
the affine general linear group, is a well-understood mathematical object. 
Regular subgroups of the affine group and their connections with algebraic structures, such as radical rings~\cite{cds06} and braces~\cite{catino2009regular}, have already been studied in several works~\cite{catino2015regular,hegedus2000regular,pellegrini2016more,pellegrini2017regular}. More recently, elementary abelian regular groups have been used in cryptography to define new operations on the message space of a block cipher and to implement statistical and group theoretical attacks~\cite{brunetta2019hidden,calderini2015differential,civino2019differential}. All these objects are well-known to be conjugated to the translation group, but this fact does not provide a simple description and representation of their elements which is useful to the cryptanalyst. For this reason, we address the problem of giving a convenient matrix representation 
of some elementary abelian regular subgroups of the affine groups and, in some cases, we classify them in terms of their conjugacy classes. The idea behind the cryptographic attack resulting from this work is the one of using alternative group structures on the message space of a block cipher to detect a bias in the distribution of the encrypted messages, as we will describe in the following section in more detail. 
Although the approach of using alternative operations in place of the XOR (the usual sum over a binary vector space) is not new~\cite{abazari2012cryptanalysis,berson1992differential}, the idea of using groups isomorphic to the translation group was never considered. 

\subsection{Organisation of the paper}
The paper is organised as follows. In Section~\ref{sec:construction} we introduce the notation and present the main focus of the work, also providing a description of the idea which is behind the use of translation groups in cryptography. In Section~\ref{sec:2} we present our main result, i.e. Theorem~\ref{th:forma}, which proves a description of the translation groups useful in block ciphers cryptanalysis. Section~\ref{sec:3} is mainly devoted to the case of binary fields, to combinatorial aspects of the topic and to a classification of conjugacy classes in low dimension. In Theorem~\ref{prop:intmax} and Theorem~\ref{prop:bound} we provide a bound on the numbers of groups as in Theorem~\ref{th:forma}.

\section{Preliminaries}\label{sec:construction}

Let us start by introducing the notation used throughout this work.\\

\noindent Let $p$ be a prime number, $n \geq 2$ a positive integer and $V \deq (\Fp)^{n}$ be the $n$-dimensional vector space over $\Fp$.   The $i$-th component of the vector $v \in V$ is denoted by $v^i \in \Fp$. The canonical basis of $V$ is composed by the vectors $\{e_i\}_{i=1}^{n}$, where $e_i^{\,j} = 1$ if and only if $i=j$, otherwise it is 0. 
The vector subspace generated by vectors $v_1,\dots,v_m \in V$ is denoted by $\Span\{v_1,\dots,v_m\}$, where $m\ge1$. Let $\Sym(V)$ be the group of all the permutations on $V$.  In this paper we use postfix notation for function evaluation, i.e. if $g \in \Sym(V)$ and $v \in V$ we write $vg$ to mean $g(v)$.
The identity of $\Sym(V)$ is denoted by $1_{V}$ and if $g_1,\dots,g_m \in \Sym(V)$, where $m\ge 1$, we denote by  $\langle g_1,\dots,g_m\rangle$ the group they generate. Let
 $\GL(V)$ be the general linear group on $V$, i.e. the group of the linear permutations of $V$, and let us denote by $T$ the group of all the translations on $V$, i.e. $T \deq \{\sigma_a \mid a \in V, \sigma_a: V \rightarrow V, x \mapsto x+a\}$. Then, let the affine general linear group $\AGL(V)$, the normaliser of $T$ in the symmetric group, be represented as $\AGL(V) = \GL(V)\ltimes T$.
Let $(\Fp)^{i\times j}$ denote the set of all matrices with entries over $\Fp$ with $i$ rows and $j$ columns. The identity matrix is denoted by $1_{n}$%and the $i \times j$ zero matrix is denoted by $0_{i,j}$
.\\

 In this work we will also use some basic ring-theoretical notions that are summarised here for the convenience of the reader. 
Let $R$ be a ring. An element $r \in R$ is called \emph{nilpotent} if $r^{\,m}=0$ for some $m\ge 1$ and it is called \emph{unipotent} if $r-1$ is nilpotent, i.e. $(r-1)^m=0$ for some $m\ge 1$.
Analogously, if $H\leq \GL(V)$ is a subgroup of unipotent permutations, then $H$ is called unipotent.
An element $M \in \GL(V)$ is said  \emph{upper unitriangular in a basis} $\{v_1,\dots,v_n\}$ on $V$ if and only if 
$
v_iM-v_i\in \Span\{v_{i+1},\dots,v_{n}\}
$ for all $1\le i\le n$. The map $M$ is called \emph{upper unitriangular} if it is upper triangular with respect to the canonical basis. The group of upper unitriangular linear maps is here denoted by  $\cU(V)$.\\

The idea of the cryptographic application of this study is described in the following section.

\subsection{Motivation and links to the theory of block ciphers}

Let $\mathcal{T} < \Sym(V)$ be elementary abelian regular. As already mentioned, from a result due to Dixon \cite{dixon1971maximal} (see also \cite{aragona2019regular} for an easy proof), there exists $g 	\in \Sym(G)$ such that $\mathcal{T} = T^g\deq g^{-1}Tg$. Since $\mathcal{T}$ inherits from $T$ its regularity, and recalling that for each $a\in V$ we denoted by $\sigma_a \in T$ the translation sending $0$ to $a$, it is possible to represent $\mathcal{T} = \{\tau_a \,|\, a \in V\}$, where the map $\tau_a$ is the unique in $\mathcal{T}$ sending $0$ to $a$. Once this labelling is established, it is possible to define an additive law $\circ$ on $V$ by letting for each $a, b \in V$ $a\circ b\deq a \tau_b$. It is easy to check that $(V,\circ)$ is an abelian group whose corresponding translation group is $T_\circ = \mathcal{T}$. Moreover, letting the multiplication of a vector by a non-zero element $s\in \Fp$ be defined as
\[
s v\deq \underbrace{v\circ\dots\circ v}_{s},
\]
it is easily checked that if $s,t\in \Fp$ and $v,w\in V$, then
\[
s(v\circ w)=s v\circ sw, 
\]
\[
(s+t)v=sv\circ tv,
\]
\[
(st)v=s(tv),
\]
and $pv=0$ since $\mathcal{T}$ is elementary. This proves that $(V,\circ)$ is a vector space over $\Fp$, and since $|V|<\infty$, $(V,\circ)$ and $(V,+)$ are isomorphic vector spaces. We will denote by $\AGL(V,\circ) \deq \AGL(V)^g$ the normaliser of $T_\circ=\mathcal{T}$ and by $\GL(V,\circ)$ the stabiliser of  $\{0\}$ in $\AGL(V,\circ)$. Since in this paper we will always deal with different operations at the same time, for sake of clarity we will sometimes denote $T$ as $T_+$, $\AGL(V)$ by $\AGL(V,+)$ and $\GL(V)$ by $\GL(V,+)$.\\

The idea of using an application of the group-theoretical study of translation groups to block ciphers comes from the fact that the translation is the standard way the user introduces its \emph{key} in the encryption process (in cryptographic terms, the key is XOR-ed to the message). In order to explain this fact and to let the reader figure out the potential attacks coming from alternative translation groups, we will give here a little and self-contained introduction to block ciphers. A \emph{block cipher} on the message space $V$ is a set of many invertible function in $\Sym(V)$, called \emph{encryption functions}. Popular examples may be found e.g. in~\cite{bogdanov2007present,daemen2013design}. Each encryption function is of the type of
\[
\rho\sigma_{k_1}\rho\sigma_{k_2}\ldots\rho\sigma_{k_r},
\] 
where $\rho \in \Sym(V)$ and the parameter $r \in \mathbb N$ are fixed by the designer and made publicly available, and the sequence $(k_1, k_2, \ldots k_r) \in V^r$ represents the \emph{encryption key} chosen by the user. Once the key $(k_1, k_2, \ldots k_r)$ and the message $m \in V$ to be sent are chosen by the sender, it delivers $m\rho\sigma_{k_1}\rho\sigma_{k_2}\ldots\rho\sigma_{k_r}$ to the receiver. If the receiver is entitled to recover the message, i.e. if it knows the secret key, it can apply the inverse of the encryption function and obtain the original message $m$. The security of this process, i.e. the inability of a non-authorised party to recover the message, strongly relies on the way the function $\rho$ is designed. Indeed, the process of designing $\rho$ is one of the most important phases in the definition of a block cipher, and it is usually carried out in order to guarantee that the obtained block cipher is resistant against each known attack (e.g. linear~\cite{matsui1993linear} and differential~\cite{biham1991differential} cryptanalysis). Giving details and properties that the function $\rho$ has to satisfy is out of the scope of this work, for whose purposes is enough to know that a minimum and crucial requirement is that $\rho \notin \AGL(V)$. As a matter of fact, the farthest it lies from the affine group, the better. This guarantees that the group $\langle \,\rho, T \,\rangle$, called \emph{the group of the round functions}, is not the affine group $\AGL(V)$. Such a group, introduced in~\cite{coppersmith1975generators} for the first time, has been carefully studied ever since researchers have shown that some of its properties can reveal weaknesses of the cipher \cite{aragona2019wave,aragona2018primitivity,aragona2017group,aragona2019type,caranti2009some,paterson1636imprimitive,sparr2008group,wernsdorf1992one,wernsdorf2002round}. Although it is rather easy to select $\rho$ such that $\langle\, \rho, T \,\rangle$ is different from $\AGL(V)$, it not as easy to prove that $\langle\, \rho, T\, \rangle$ is not contained in any conjugate of $\AGL(V)$ in $\Sym(V)$. If this is the case, i.e. if there exists $g \in \Sym(V)$ such that  $\langle \,\rho, T \,\rangle < \AGL(V)^g$, then there exists an operation $\circ$ such that 
\begin{equation}\label{eq}
\langle \,\rho, T\, \rangle \leq \AGL(V,\circ),
\end{equation}
which means that each encryption function is affine with respect to the operation $\circ$, a serious threat for the security of the cipher. A description of the attack that can be perfomed in this case is shown in~\cite{calderini2015boolean}. Another example in this regard, i.e. a successful attack against a block cipher which makes use of an operation as described above, can be found in~\cite{civino2019differential}. For the reason explained before, since our interest is in determining if and when the group of the round functions is as in Eq.~\eqref{eq}, we focus on investigating operations $\circ$ such that $T < \AGL(V, \circ)$. Such hypothesis is also decisive in the application studied in~\cite{civino2019differential}, where the classical differential attack (see e.g.~\cite{biham1991differential2,biham1992differential}) is generalised to alternative operations. Moreover, we will always assume $T_\circ < \AGL(V)$, since it guarantees fast computation, crucial in the application to cryptanalysis. The related problem of determining conditions on $\rho \notin \AGL(V)$ which ensure that $\rho \in \AGL(V,\circ)$ for some operation $\circ$ is still open. Some partial results can be found in~\cite{brunetta2019hidden,calderini2015differential,civino2019differential}.\\

In the next section we will introduce our novel results and in particular we will describe all elementary abelian regular groups $T_\circ < \AGL(V,+)$ such that $T_+ < \AGL(V,\circ)$.

\section{Abelian regular subgroups of the affine groups}\label{sec:2}
Keeping in mind the construction of Sec.~\ref{sec:construction}, we now focus on groups conjugated to $T$ which are affine groups. A seminal work for this research is the paper \cite{cds06}, where the authors give an easy description of the abelian regular subgroups of the affine group in terms of commutative associative algebras that one can define on the vector space $(V,+)$. Here we summarise their main results. Recall that a \emph{Jacobson radical ring} is a ring $(V,+,\cdot)$ such that $(V,\diamond)$ is a group, where the operation $\diamond$ defined as $a\diamond b =a + b + a\cdot b$, for each $a, b \in V$. Note that in general the operation $\diamond$ does not induce a vector space structure on $V$.
The proof of the next result may be found in \cite{cds06}.
\begin{theorem}\label{th:somme}
Let $\mathbb{K}$ be any (finite or infinite) field, and $(V,+)$ be a vector space of any dimension over $\mathbb{K}$.
There is a one-to-one correspondence between
\begin{enumerate}
\item abelian regular subgroups of $\AGL(V,+)$, 
\item commutative, associative $\mathbb{K}$-algebra structures $(V,+,\cdot)$ that one can impose on the vector space structure $(V,+)$, such that the resulting ring is radical.
\end{enumerate}
In this correspondence, isomorphism classes of $\mathbb{K}$-algebras correspond to conjugacy classes of abelian regular subgroups of $\AGL(V,+)$, where the conjugation is under the action of $\GL(V,+)$ .
\end{theorem}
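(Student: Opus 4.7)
My plan is to build explicit maps in each direction, keyed on the identification $\bv \leftrightarrow \tau_\bv$ coming from regularity, and then check that the conjugation/isomorphism equivalence carries over.

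For the direction (1)$\Rightarrow$(2), starting with an abelian regular subgroup $T \subseteq \AGL(V,+)$, regularity gives for each $\bv \in V$ a unique $\tau_\bv \in T$ with $0\tau_\bv = \bv$, and since $\tau_\bv$ is affine with $0 \mapsto \bv$, it has the form $\bx\tau_\bv = \bx M_\bv + \bv$ for some $M_\bv \in \GL(V,+)$. I would define
\[
\bv \cdot \bw := \bv(M_\bw - I).
\]
Applying commutativity of $T$ at $\bx = 0$ yields $\bv M_\bw + \bw = \bw M_\bv + \bv$, i.e.\ $\bv \cdot \bw = \bw \cdot \bv$, so $\cdot$ is commutative. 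Bilinearity over $\mathbb{K}$ is immediate: linearity in the left argument from $M_\bw - I \in \mathrm{End}(V,+)$, and in the right argument from commutativity. Composing two elements of $T$ gives
\[
\bx(\tau_\bv\tau_\bw) = \bx M_\bv M_\bw + \bv\cdot\bw + \bv + \bw,
\]
so $\tau_\bv\tau_\bw = \tau_{\bv \diamond \bw}$ with $\bv \diamond \bw := \bv + \bw + \bv\cdot\bw$, and $\bv \mapsto \tau_\bv$ is a group isomorphism $(V,\diamond) \to T$ with associated identity $M_{\bv\diamond\bw} = M_\bv M_\bw$. In particular $\diamond$ is associative, and expanding $(\bu\diamond\bv)\diamond\bw = \bu\diamond(\bv\diamond\bw)$ and cancelling common terms gives $(\bu\cdot\bv)\cdot\bw = \bu\cdot(\bv\cdot\bw)$; the existence of $\diamond$-inverses for every element is precisely the condition that $(V,+,\cdot)$ is a radical ring.

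For the direction (2)$\Rightarrow$(1), starting from a commutative associative $\mathbb{K}$-algebra $(V,+,\cdot)$ that is a radical ring, set $\tau_\bv(\bx) := \bx + \bx\cdot\bv + \bv$; bilinearity makes $\tau_\bv$ affine, so $\tau_\bv \in \AGL(V,+)$. Let $T := \{\tau_\bv : \bv \in V\}$. The identity $\tau_\bv\tau_\bw = \tau_{\bv\diamond\bw}$, a direct calculation using associativity of $\cdot$, together with the radical assumption, shows that $T$ is a subgroup of $\AGL(V,+)$ isomorphic to $(V,\diamond)$; it is abelian by commutativity of $\cdot$ and regular because $0\tau_\bv = \bv$ with the $\tau_\bv$ pairwise distinct. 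Checking that this construction inverts the previous one is routine: $\tau_\bv$ has linear part $I + M_\bv^{\cdot}$ (right multiplication by $\bv$), so $M_\bv - I$ is recovered as multiplication by $\bv$, giving back $\cdot$.

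For the conjugation/isomorphism correspondence, if $g \in \GL(V,+)$ conjugates $T_1$ to $T_2$, the regularity-induced bijection sends $\tau_\bv^{(1)} \mapsto g^{-1}\tau_\bv^{(1)} g = \tau_{\bv g}^{(2)}$, and comparing affine parts yields $M^{(2)}_{\bv g} = g^{-1} M^{(1)}_\bv g$, from which a short computation shows $g$ realises a $\mathbb{K}$-algebra isomorphism $(V,+,\cdot_1) \to (V,+,\cdot_2)$; the reverse implication reverses the same calculation. The main obstacle I anticipate is the algebraic bookkeeping needed to verify simultaneously bilinearity, associativity, and the radical property with compatible signs and argument orders; the organising identity throughout is $M_{\bv\diamond\bw} = M_\bv M_\bw$, which translates the multiplicative structure of $T$ into the ring-theoretic structure of $(V,+,\cdot)$.
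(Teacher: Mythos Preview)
Your proof is correct, and the correspondence you build---defining $\bv\cdot\bw = \bv(M_\bw - I)$ from the linear part of $\tau_\bw$, and conversely $\tau_\bv(\bx) = \bx + \bx\cdot\bv + \bv$---is exactly the one the paper describes immediately after the statement (in the paper's notation, $\bx\cdot\ba = \bx\gd_\ba$ with $\gd_\ba = \gk_\ba - 1_V$). Note that the paper does not give its own proof of this theorem: it is quoted from \cite{CGC-alg-art-affreg}, and only the explicit form of the correspondence is recorded, so your argument fills in precisely the verifications the paper omits.
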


\noindent The correspondence mentioned in the previous result may be written explicitly, proceedings as follows. Let $\mathcal{T} < \AGL(V)$ be abelian and regular. Since $\mathcal{T}$ is regular, reasoning as in Sec.~\ref{sec:construction} its elements can be labeled as $\mathcal{T}=\{\tau_a \mid a \in V\}$. For each $a \in V$, from the hypothesis, there exists $M_{a,\mathcal T} \in GL(V,+)$ and $\sigma_b \in T_+$ for some $b \in V$ such that $\tau_{a} = M_{a,\mathcal T}\sigma_b$. In order to keep the notation lighter, $M_{a,\mathcal T}$ will be simply denoted by $M_{a}$. For any $a\in V$, let us define the map $\gd_a\deq M_a-1_V$. Then, operation $\cdot$ defined on $V$ by letting $x\cdot a=x\gd_a$ is such that the structure $(V,+,\cdot)$ is a commutative $\mathbb{K}$-algebra and the resulting ring is radical. Moreover, notice that $0\tau_a = a$ by definition, then $a = 0 \tau_a = 0 M_a\sigma_b=b$, hence $\tau_a = M_a\sigma_a$ for each $a \in V$. 
Denoting by $\circ$ the operation induced by $\mathcal{T}$, let us now define the set

\[
\Omega(\mathcal{T}) = \Omega_\circ \deq \left\{M_a \mid a \in V\right\} < \GL(V),
\]
and denote by $T_\circ = \mathcal T$.
\begin{proposition}\label{lm:uni}
Let $\T < \AGL(V)$ be an elementary abelian regular subgroup. Then for each $a\in V$, $M_a\in \GL(V)$ has order $p$ and it is unipotent. In particular $\Omega(\T)$ is a unipotent subgroup of $\GL(V)$.
\end{proposition}
\begin{proof}
Let $a \in V$. Since $\T$ is elementary, $\tau_a$ has order $p$, so $a\gt_a^{\,\,p-1}=0$. For each $x \in V$ we get
\[
x=x \gt_a^p=(xM_a+a)\gt_a^{\,\,p-1}=(x M_a^2+a\gt_a)\gt_a^{\,\,p-2}=\ldots=xM_a^p+a\gt_a^{\,\,p-1},
\]
therefore $ 0 = M_a^{p}-1_V=\left(M_a-1_V\right)^{p}$.
 \end{proof}
Let us now define an important $V$-subspace:
\[
W(\mathcal{T})\deq \{a\mid\gs_a\in \mathcal{T}\} = \{a\mid\gs_a = \tau_a\}.
\]
We will sometimes denote $W(\mathcal{T})$ by $W_\circ$.  
It is easily  checked that $W(\mathcal{T})$ is a subspace of $(V,+)$ and $(V,\circ)$. Such a subspace is nontrivial for the following theorem, proven in~\cite{cds06}. It is straightforward but important to notice that if $a \in W(\mathcal T)$, then $x+a = x \circ a$ holds for each $x \in V$, and consequently $M_a= 1_n$.

\begin{theorem}[\cite{cds06}]\label{lm:car}
Let $\mathcal{T}\leq \AGL(V,+)$ be an abelian regular subgroup. If $V$ is finite, then $T\cap \mathcal{T} \neq \langle 1_V\rangle$.
\end{theorem}
\noindent We will show soon that  $W(\mathcal{T})$ plays an important role for the characterisation of maps in the group $\mathcal{T}$.\\

Our purpose is, given an operation $\circ$ induced by the group $\mathcal{T} = \{\tau_a \mid a \in V\}$, to describe the matrices $M_a$ for each $a \in V$, where $\tau_a = M_a\sigma_a$. We show now some preliminary results. \\
Let $U$ be a subspace of $V$. Then for all $\g\in\GL(V)$ such that $U\g=U$, the action of $\g$ over $V/U$ is well defined by means of the map $\bar \g:[v]\mapsto [v \g]$ in $\GL(V/U)$.
Let us prove now the following characterisation, recalling that $\cU(V)$ denotes the group of upper unitriangular linear maps.

\begin{lemma}\label{lm:generatori}
Let $M_i \in \cU(V)$ be a unitriangular map acting as the identity on the quotient $V/\Span\{e_{i+1},\ldots,e_{n}\}$, for each $1 \leq i \leq n$. Then, the affine transformations
$M_i\sigma_{e_i}$ generate a transitive subgroup of $\AGL(V)$.

%Let  $\tau_{e_1},\tau_{e_2}, \ldots, \tau_{e_n} \in \AGL(V)$ and let us assume that for each $1 \leq i \leq n$ the map $\tau_{e_i}$ is such that  
%\begin{enumerate}
%\item there exists $M_{e_i} \in \mathcal U(V)$ such that $\tau_{e_i} = M_{e_i}\sigma_{e_i}$;
%\item $\tau_{e_i}$ acts as the identity map over $V/\Span\{e_{i+1},\ldots,e_{n}\}$.
%\end{enumerate} 
%Then $\langle \tau_{e_1},\tau_{e_2}, \ldots, \tau_{e_n} \rangle$ is transitive. Moreover, if $\langle \tau_{e_1},\tau_{e_2}, \ldots, \tau_{e_n} \rangle$ is abelian, then it is also regular.
\end{lemma}

\begin{proof}
Denote by $\tau_{e_i}$ the transformation $M_i\sigma_{e_i}$.
Let us start by observing that for each $1\leq i \leq n$ the action of $M_{i}$ over $V/\Span\{e_{i+1},\ldots,e_{n}\}$ is well defined and from the hypotheses $\tau_{e_i}$ acts on vectors of $V$ leaving the first $i-1$ coordinates unchanged. Let now $v =(v^1, v^2, \ldots, v^n)$ and $w =(w^1,w^2, \ldots, w^n)$ be two elements of $V$ and let us show that there exists $\tau \in \langle \tau_{e_1},\tau_{e_2}, \ldots, \tau_{e_n} \rangle$ such that $v\tau=w$. Let $\g^1 \in \Fp$ such that  $v^1 +\g^1 = w^1$. So
\[
v \left(\tau_{e_1}\right)^{\gamma^1}=(w^1,v^2+c^2,\dots,v^n+c^n)\deq v',
\]
for some $c^i \in \Fp$ for $2 \leq i \leq n$, where $c^i$ depends on $v,\tau_{e_1}$ and $\gamma^1$. Analogously, if $\g^2 \in \Fp$ is such that  $\left(v'\right)^2 +\g^2= w^2$, then
\[
v' \left(\tau_{e_2}\right)^{\gamma^2}=(w^1,w^2,v^3+d^3,\dots,v^n+d^n),
\]
for some $d^{\,i} \in \Fp$ for $3 \leq i \leq n$.
In this way, we obtain $$ \tau \deq \left(\tau_{e_1}\right)^{\gamma^1} \left(\tau_{e_2}\right)^{\gamma^2}\cdots \left(\tau_{e_n}\right)^{\gamma^n}$$ such that $v\tau=w$, hence the transitivity is proven.
 \end{proof}

\begin{remark}\label{rm:basis}
Notice that in the conditions of Lemma~\ref{lm:generatori}, if $\circ$ denotes the operation induced by $\mathcal T = \langle \tau_{e_1},\tau_{e_2}, \ldots, \tau_{e_n} \rangle$, then $\{e_i\}_{i=1}^{n}$ is a basis of $(V,\circ)$. However, this is not true in general. In the following example on $V=(\FF_2)^3$ indeed, the canonical basis is not a basis for $(V,\circ)$.
Let $T_\circ$ be defined in the following way:
\[
T_\circ\deq\langle M_{(1,0,1)}\sigma_{(1,0,1)}	, 
M_{(0,1,1)}\sigma_{(0,1,1)}, 
M_{(1,1,1)}\sigma_{(1,1,1)} \rangle,
\]
where 
\[
M_{(1,0,1)}\deq
\begin{pmatrix}
					0&1&1\\
					0&1&0\\
					1&1&0\end{pmatrix},
M_{(0,1,1)} \deq
\begin{pmatrix}
					1&0&0\\
					1&0&1\\
					1&1&0\end{pmatrix} \text{ and }
M_{(1,1,1)} \deq1_n.
\]
Then the translations $\gt_{e_1},\gt_{e_2}, \gt_{e_3}$ are respectively individuated by the matrices
\[
M_{e_1} \deq
\begin{pmatrix}
					1&0&0\\
					1&0&1\\
					1&1&0\end{pmatrix}, 
M_{e_2} \deq
					\begin{pmatrix}
					0&1&1\\
					0&1&0\\
					1&1&0\end{pmatrix} \text{ and }
M_{e_3} \deq
					\begin{pmatrix}
					0&1&1\\
					1&0&1\\
					0&0&1\end{pmatrix}.
\]
It is a straightforward check that $e_1\circ e_2 = e_3$.
\end{remark}
Let us now show a more general result which will be useful later. The following well-known result (see  e.g.~\cite[pag. 62]{waterhouse2012introduction}) is needed.

\begin{theorem}\label{th:unipo}
Let $H\leq \GL(V)$ be a group of unipotent matrices. Then there exists a basis of $V$ in which all elements of $H$ are upper triangular.
\end{theorem}
\begin{lemma}\label{lm:unipotent}
Let $G<\GL(V)$ be a unipotent subgroup and let $U\subseteq V$ be a subspace such that for all $v\in U$ and $g\in G$ we have $v g=v$, i.e. $G$ is a subgroup of the pointwise stabiliser of $U$. Let $d \deq \dim(U)$ and $m\deq n-d$. Then all elements of $G$ are upper triangular in a basis $\{v_1,\dots,v_{m},v_{m+1},\dots,v_{m+d}\}$, where $\{v_{m+1},\dots,v_{m+d}\}$ is any basis of $U$.
\end{lemma}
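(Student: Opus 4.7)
The plan is to reduce to Theorem \ref{th:unipo} by passing to the quotient $V/W$. Since $G$ fixes $W$ pointwise, in particular $Wg=W$ for every $g\in G$, so each $g$ induces a well-defined linear map $\bar g\in \GL(V/W)$ sending $[\bv]$ to $[\bv g]$, and $g\mapsto\bar g$ is a group homomorphism. Hence $\bar G=\{\bar g\mid g\in G\}$ is a subgroup of $\GL(V/W)$.

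I would then check that $\bar G$ is still unipotent. If $(g-1_V)^m=0$ on $V$, then $[\bv](\bar g-1_{V/W})^m=[\bv (g-1_V)^m]=[0]$ for every $\bv\in V$, so $(\bar g-1_{V/W})^m=0$. Applying Theorem \ref{th:unipo} to $\bar G\subseteq \GL(V/W)$ yields a basis $[\bu_1],\dots,[\bu_n]$ of $V/W$, with lifts $\bu_i\in V$, such that $[\bu_i]\bar g-[\bu_i]\in\Span\{[\bu_{i+1}],\dots,[\bu_n]\}$ for every $g\in G$ and every $1\le i\le n$. Lifting back to $V$, this reads $\bu_i g-\bu_i\in \Span\{\bu_{i+1},\dots,\bu_n\}+W$.

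Finally, I would set $\bv_i=\bu_i$ for $1\le i\le n$ and keep the prescribed basis $\bv_{n+1},\dots,\bv_{n+d}$ of $W$. The concatenation is a basis of $V$, because any lift of a basis of $V/W$ together with any basis of $W$ spans $V$ and has the correct cardinality. For $1\le i\le n$ the containment above becomes $\bv_i g-\bv_i\in \Span\{\bv_{i+1},\dots,\bv_n\}+\Span\{\bv_{n+1},\dots,\bv_{n+d}\}=\Span\{\bv_{i+1},\dots,\bv_N\}$, while for $n<i\le N$ the vector $\bv_i$ lies in $W$ and so is fixed by every $g\in G$, giving $\bv_i g-\bv_i=0$ trivially. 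This is precisely the paper's notion of upper triangularity with respect to the basis $\{\bv_1,\dots,\bv_N\}$.

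The only step requiring any care is verifying that the quotient action is well-defined and that unipotency is preserved in $\GL(V/W)$; both are immediate from the hypothesis that $G$ fixes $W$ pointwise and from the definition of unipotent element, so the real content is simply Theorem \ref{th:unipo} applied one dimension lower. No substantial obstacle is expected.
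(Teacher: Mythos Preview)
Your proposal is correct and follows essentially the same approach as the paper: pass to the quotient $V/W$, note that the induced action is unipotent, apply Theorem~\ref{th:unipo} there, lift the resulting basis, and append any basis of $W$. The only difference is that you spell out in more detail the well-definedness of the quotient action, the preservation of unipotency, and why the concatenation is a basis, all of which the paper leaves implicit.
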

\begin{proof}
Since $G$ fixes all the elements of $U$, it acts as a group of  unipotent maps on $V/U$. From Theorem \ref{th:unipo} there exists a basis $[v_{1}],\dots,[v_{m}]$ of $V/U$, such that $[v_i]g-[v_i]$ lies in $\Span\{[v_{i+1}],\dots,[v_{m}]\}$ for all $g \in G$. Then, all elements of $G$ are upper triangular in the basis $\{v_1,\dots,v_{m},v_{m+1},\dots,v_{n}\}$, since $v_{i}g-v_{i}=0$ for all $m+1\le i\le n$.
 \end{proof}
\noindent The previous result reads in the way displayed below, when specified to our case. 
\begin{corollary}\label{cor:unipotent}
Let $\T< \AGL(V)$ be an elementary abelian regular group. Let $d \deq \dim (W(\T))$ and let $m\deq n-d$. Then all elements of $\Omega(\T)$ are upper triangular in  a basis $\{v_1,\dots,v_{m+1},\dots,v_{n}\}$, where $\{v_{m+1},\dots,v_{n}\}$ is any basis of $W(\T)$.
\end{corollary}
\begin{proof}
By Proposition~\ref{lm:uni}, $\Omega(\T)$ is unipotent.
Moreover, by definition, for all $v\in W(\T)$ and $M\in \Omega(\T)$ we have $vM=v$. Hence, the claim follows from Lemma \ref{lm:unipotent}. \end{proof}

\noindent The results obtained so far may be summarised in the following theorem. According to this result, when considering an operation $\circ$ we can always assume, up to conjugation, that $W_\circ$ is generated by the last vectors of the canonical basis.

\begin{theorem}\label{cor:identitasotto}
Let $\T< \AGL(V)$ be an elementary abelian regular group. Let $d \deq \dim (W(\T))$ and let $m\deq n-d$. Then there exists $g \in \GL(V)$ such that $\Omega(\T^g) < \cU(V)$ and $W(\T^g)=\Span\{e_{m+1},\dots,e_{n}\}$.
\end{theorem}
\begin{proof}
From Corollary \ref{cor:unipotent}, all the elements of $\Omega(\T)$ are upper triangular with respect to a basis $\{v_1,\dots,v_n\}$ of $V$, whose last $d$ vector form a basis of $W(\T)$. Let $g\in\GL(V)$ such that $v_ig=e_i$ for each $1\leq i \leq n$. It is easy to check that $\Omega(\T^g)=\Omega(\T)^g$, then for all $M \in\Omega(\T)$ we have 
\[
e_ig^{-1}M g-e_i=v_iM g-v_ig=(v_iM-v_i)g.
\]
Since $v_iM-v_i\in \Span\{v_{i+1},...,v_{n}\}$, we have $(v_iM-v_i)g\in \Span\{e_{i+1},...,e_{n}\}$. In conclusion, from $\left(\tau_v\right)^g: x\mapsto x \left(M_v\right)^g g+v g$, we also obtain $W(\T^g)=W(\T)g=\Span\{e_{m+1},\dots,e_{n}\}$.
 \end{proof}
Till now we have assumed that the subgroup $\T$ is an affine group. For reasons already explained in Sec.~\ref{sec:construction} and related to the application in cryptography of this construction, we are interested in groups whose normalisers contain the group of translations $T_+$, i.e. in operations $T_\circ$ for which, given $g \in \Sym(V)$ such that $\T = T_+^g$, we also have $T_+ <  \AGL(V,\circ) =  \AGL(V,+)^g$. Let us report a result from \cite{cds06} which is useful for our purpose. 

\begin{lemma}\label{lm:normalizer}
Let $\T<\AGL(V)$ be abelian and regular. Then for each $\gs_x\in T_+$  and $\gt_y\in \T$ we have
\[
[\gs_x,\gt_y]=\gs_{x\cdot y},
\]
where $\cdot$ denotes the product of the $\Fp$-algebra related to $\T$ as in Theorem \ref{th:somme}, and $[\gs_x,\gt_y]\deq \gs_x^{-1}\gt_y^{-1}\gs_x\gt_y$. 
\end{lemma}

\noindent In our case, from Lemma \ref{lm:normalizer} we obtain that $T$ normalises $\T < \AGL(V)$ if and only if $\gs_{x\cdot y}\in \T$ for all $x, y \in V$. Indeed, if for all $\gs_x \in T$ we have $\T^{\gs_x}=\T$, then
\[
\gs_{x\cdot y}={\gs_x^{-1}\gt_y^{-1}\gs_x}\gt_y\in \T.
\]
Conversely, if $\gs_{x\cdot y}\in \T$ for each $x,y \in V$, then 
\[
\T \ni \gs_{x\cdot y}\gt_y^{-1}=\gs_x^{-1}\gt_y^{-1}\gs_x.
\]
Finally notice that the condition $\gs_{x\cdot y}\in \T$ for all $x, y \in V$ is equivalent to $x\cdot y\cdot z=0$ for all $x,y,z \in V$.\\

We are now ready to prove one of the main results of this work, i.e. the structure of affine translation groups whose normalisers contain the group $T_+$. Before doing so, let us recall that for sake of simplicity, proceeding as in Sec.~\ref{sec:construction}, given a group $\T = T_\circ < \AGL(V)$, we denote by $\AGL(V,\circ)$ the normaliser in $\Sym(V)$ of $\T$, which is $\AGL(V,+)^g$ where $g \in \Sym(V)$ is such that $\T = T^g$.  

\begin{theorem}\label{th:forma}
Let $\T< \AGL(V,+)$ be elementary abelian regular and let $\circ$ be the operation induced on $V$. Let $d \deq \dim (W(\T))$, let $m\deq n-d$ and let us assume $W(\T)=\Span\{e_{m+1},\dots,e_{n}\}$. Then, $T_+ < \AGL(V,\circ)$ if and only if for all $M_y\in \Omega(\T)$ there exists a matrix $B_y\in (\Fp)^{m\times d}$ such that
\begin{equation}\label{eq:goal}
M_y=\begin{pmatrix}
1_{m} & B_y\\
0 & 1_{d} \end{pmatrix}.
\end{equation}
\end{theorem}
\begin{proof}
By Theorem~\ref{cor:identitasotto}, there exists another group operation $\diamond$ on $V$ such that the corresponding translation group is conjugated, by an element of $\GL(V)$, to $T_\circ$ and satisfies $W(T_\diamond)= W(T_\circ)$ and $\Omega(T_\diamond) = \{\overline{M_a} \mid a \in V\}<\cU(V)$. Let $y \in V$ and let $A_y\in (\Fp)^{m\times m}$ an upper-triangular matrix and $B_y\in (\Fp)^{m\times d}$ such that
\[
\overline{M_y}=\begin{pmatrix}
A_y & B_y\\
0 & 1_{d} \end{pmatrix}.
\]
Notice that the lower structure of the matrix derives by the property $e_i \in W(T_\diamond)$ for each $m+1 \leq i \leq n$, i.e. 
$y \diamond {e}_i=e_i\overline{M}_y+y =y+{e}_i$ for each $m+1 \leq i \leq n$. Recall that 
\begin{eqnarray}
T_+ < \AGL(V,\diamond) &\iff& \forall x,y \in V \quad x \cdot y \in W(T_\diamond) \label{eq:one_main}\\
 &\iff & \forall x,y \in V \quad x\overline{M_y} - x \in W(T_\diamond), \label{eq:two_main}
\end{eqnarray}
where the equivalence in Eq.\eqref{eq:one_main} derives from Lemma~\ref{lm:normalizer}. From Eq.\eqref{eq:two_main} instead, considering $x\in \Span\{e_{1},\dots,e_{m}\}$
we obtain that $x\overline{M_y}-x \in W(T_\diamond)$ if and only if $A_y=1_{m}$. 

In order to conclude, we need to prove that each conjugate $T_\circ = {T_\diamond}^{g}$ is such that all the matrices in the group $\Omega(T_\circ)$ are as in Eq.~\eqref{eq:goal}, provided that $g \in \GL(V)$ and $W(T_\circ)$ is spanned by the last $d$ vectors of the canonical basis. Let $g \in \GL(V)$ such that $T_\circ = T_\diamond^{\,g}$.
Since $W({T_\diamond}){g} = W({T_\diamond}^{g})=W(T_\circ)$, then $\Span\{e_{m+1},\dots,e_{n}\}g=\Span\{e_{m+1},\dots,e_{n}\}$ and also $\Span\{e_{m+1},\dots,e_{n}\}g^{-1}=\Span\{e_{m+1},\dots,e_{n}\}$. Consequently
\[
g=\begin{pmatrix}{}
G_1&G_2\\
0&G_3\end{pmatrix} \text{ and } g^{-1}=\begin{pmatrix}
G_1^{-1}&{G_2}'\\
0&G_3^{-1}\end{pmatrix},
\]
for some $G_1\in(\Fp)^{m\times m},G_2,{G_2}'\in(\Fp)^{m\times d}$ and $G_3\in(\Fp)^{d\times d}$.
Thus, if $M\in\Omega(T_\diamond)$ we have
\[
M^g=\begin{pmatrix}
G_1^{-1}&{G_2}'\\
0&G_3^{-1}\end{pmatrix} 
\begin{pmatrix}
1_{m} & B_{m\times d}\\
0 & 1_{d} \end{pmatrix}
\begin{pmatrix}
G_1&G_2\\
0&G_3\end{pmatrix}
=
\begin{pmatrix}
1_{m} & {{B'}_{m\times d}}\\
0 & 1_{d} \end{pmatrix},
\]
therefore the claim follows from $\Omega(T_\circ)=\Omega({T_\diamond}^g)=\Omega(T_\diamond)^g$.
 \end{proof}

\noindent The characterisation given above allows to construct an isomorphism between the vector spaces $(V,\circ)$ and $(V,+)$, which can be computed very efficiently (see \cite{calderini2015boolean}). This makes some attacks feasible~\cite{calderini2015boolean,civino2019differential}.
Moreover, Theorem~\ref{th:forma} can be used to determine the maps contained in $\GL(V,\circ)\cap\GL(V,+)$ (see \cite{brunetta2019hidden,civino2019differential}).

\section{Even characteristic and combinatorial formulas}\label{sec:3}
In this section we specialise our focus to the cryptographically-relevant case of binary fields. Let us assume from now on that $p=2$. 
In this case, we can prove (see Theorem~\ref{prop:intmax} and Theorem~\ref{prop:bound}) an upper bound on the number of the elementary abelian regular subgroups as in Theorem~\ref{th:forma}. Moreover, we can calculate the number of these groups if the co-dimension of $W(T_\circ)$ is 2 or 3. To conclude, we report the full classification of the elementary abelian regular subgroups of $\AGL(V,+)$ up to dimension 6. Before doing so, let us prove the following result which bounds the dimension of the subspace $W(T_\circ)$.

\begin{proposition}\label{prop:intmax}
Let $\T< \AGL(V,+)$ be elementary abelian regular and let $d \deq \dim (W(\T))$. 
If $\T\ne T$, then \[\dfrac{(-1)^n+3}{2}\le d\le n-2.\]
\end{proposition}
\begin{proof}
From Theorem~\ref{lm:car} and from the hypothesis  we have $1 \leq d \leq n-1$. Let us now assume that $W(\T)$ cointains $n-1$ linearly independent vectors $v_1,v_2\dots,v_{n-1} \in V$ and let $v_n \in V$ independent from $v_1,\dots,v_{n-1}$. Let $\circ$ be the operation induced by $\T$.
Then, $v_i\circ v_n=v_i +v_n$, thus  $v_iM_{v_n}=v_i$ for all $1\le i \le n-1$. Moreover, $v_n\circ v_n=0$ and so $v_nM_{v_n}=v_n$. Then, if $v\in V$, then 
\[
v\circ v_n=\left(\sum_{i<n}\xi_iv_i+\xi_nv_n\right)M_{v_n}+v_n=\sum_{i<n}\ga_iv_i+\ga_nv_n+v_n=v+v_n,
\] 
which implies $d=n$, a contradiction. If $n$ is even, then $d>1$, i.e. $T \cap \T$ contains at least four elements. A proof of this fact may be found in~\cite{brunetta2019hidden}.
 \end{proof}

Let us now prove that if $\T$ normalises $T$ and the co-dimension of $W(\T)$ is at most $5$, then we also have that $T$ normalises $\T$.
\begin{proposition}\label{lm:forma}
Let $\T< \AGL(V)$ be elementary abelian regular, and let $\circ$ be the operation induced. Let $d \deq \dim (W(\T))$ and $m\deq n-d$. If $2\le m\le 5$, then $\AGL(V,\circ)$ contains $T$.
\end{proposition}
\begin{proof}
The claim follows if we prove that if $x,y \in V$, then $x \cdot y \in W(\T)$. Let $x, y \in V$ and let us assume by contradiction $x \cdot y \notin W(\T)$.
 Then there exists $z \notin W(\T)$ such that $x\cdot y\cdot z \ne 0$. Let us show that $x, y,z,x\cdot y, x\cdot z, y \cdot z, x\cdot y\cdot z$ are linearly independent. Let $\xi_i \in \mathbb{F}_2$ for $1 \leq i \leq 7$ such that
\[
\xi_1x +\xi_2y+\xi_3z+\xi_4x\cdot y+\xi_5x\cdot z+\xi_6y\cdot y+\xi_7x\cdot y\cdot z = 0.
\]
By multiplying each member of the previous equation by $y\cdot z$ we obtain $\xi_1 x\cdot y\cdot z = 0$, which implies $\xi_1 = 0$. In the same way, by multiplying  by $x \cdot z$ we prove $\xi_2 = 0$. Proceeding in this way one proves that $\xi_i = 0$ for each $1 \leq i \leq 7$.  This proves that $x, y,z,x\cdot y, x\cdot z, y \cdot z, x\cdot y\cdot z$ are linearly independent and none of these belongs to $W(\T)$. Using a similar argument one proves that $\Span\{x, y,z,x\cdot y, x\cdot z, y \cdot z, x\cdot y\cdot z\}\cap W(\T) = \{0\}$. This implies $m\geq 6$, a contradiction.
 \end{proof}
We have presented the previous result in the way which best fit our needs. However, it can be stated more generally in the following way.
\begin{proposition}
Let $\T_1, \T_2 < \Sym(V)$ be elementary abelian regular. Let $d$ be such that $2^d =|\T_1 \cap \T_2|$, $m\deq n-d$ and let us assume $2\le m\le 5$. Then $\T_1$ is contained in the normaliser of $\T_2$ if and only if $\T_2$ is contained in the normaliser of $\T_1$.
\end{proposition}
\begin{example}
Notice that Proposition \ref{lm:forma} does not hold, in general, for $m\ge 6$. Let $(V,+,\cdot)$ be the exterior algebra over a vector space of dimension three, spanned by $e_1,e_2,e_3$. Hence a basis of $V$ is composed by
\[
e_1,e_2,e_3,e_4=e_1\wedge e_2, e_5=e_1\wedge e_3, e_6=e_2\wedge e_3, e_7=e_1\wedge e_2 \wedge e_3.
\]
The associated translation group $T_\circ$ is such that $W(T_\circ)=\Span\{e_7\}$, but we have
\[
M_{e_1}=\begin{pmatrix}
1& 0& 0& 0& 0& 0& 0\\
 &1 &0 &\fbox{1} &0 &0 &0\\
 & &1 &0 &\fbox{1} &0 &0\\
 & & &1 &0 &0 &0\\
 & & & &1 &0 &0\\
 && & & & 1& 1\\
&&  & & & & 1\end{pmatrix}.
\]
From Theorem \ref{th:forma}, $\AGL(V,\circ)$ cannot contain the group $T_+$.
\end{example}

Let us now point out, starting from Theorem~\ref{th:forma}, some properties of the matrices in $\Omega(T_\circ)$ defining the operation $\circ$. Let us assume $T_\circ < \AGL(V)$ be elementary abelian regular and let us denote, as usual, $d \deq \dim (W(T_\circ))$ and $m\deq n-d$. Let $1 \leq i \ne j \leq m$.  Since $e_i \circ e_i = e_i M_{e_i} + e_i = 0$ we obtain that the $i$-th row of $B_{e_i}$ is zero, where
\[
M_{e_i}=\begin{pmatrix}
1_{m} & B_{e_i}\\
0 & 1_{d} \end{pmatrix}.
\]
Instead, from $e_i \circ e_j = e_iM_{e_j} + e_j = e_jM_{e_i} + e_i = e_j \circ e_i$, we obtain that the $j$-th row of $B_{e_i}$ equals the $i$-th row of $B_{e_j}$. Moreover,  let $x \in V$. Then 
\begin{eqnarray*}
x \circ e_i \circ e_j &=&\left(xM_{e_i}+e_i\right)\circ e_j\\
 &=& \left(xM_{e_i}+e_i\right)M_{e_j} + e_j\\
  &=& xM_{e_i}M_{e_j}+e_iM_{e_j}+e_j\\
  &=& xM_{e_i}M_{e_j} + e_i\circ e_j,
\end{eqnarray*}
which proves that $M_{e_i\circ e_j} = M_{e_i}M_{e_j}$, i.e. 
\[
M_{e_i\circ e_j}=\begin{pmatrix}
1_{m} & B_{e_i}+B_{e_j}\\
0 & 1_{d} \end{pmatrix}.
\]
This fact is easily generalised as follows. 

\begin{proposition}\label{rm:comb}
Let $T_\circ< \AGL(V)$ be an elementary abelian regular group. Let $d \deq \dim (W(T_\circ))$ and $m\deq n-d$. Moreover, let us assume $W(T_\circ)=\Span\{e_{m+1},\dots,e_{n}\}$ and $T < \AGL(V,\circ)$.
Let $x\in V$, $x=\xi_1 e_1\circ\dots\circ \xi_{n}e_{n}$ for some $\xi_i \in \FF_2$. Then 
\[
M_{x}=\begin{pmatrix}
1_{m} & \sum_{i=1}^m\xi_i B_{e_i}\\
0 & 1_{d} \end{pmatrix}.
\]
\end{proposition}
\begin{proof}
From the hypothesis we have that the canonical basis of $(V,+)$ is a basis also for $(V,\circ)$ (see Remark \ref{rm:basis}). Moreover, $B_{e_i} \ne 0$ for $1\leq i \leq m$ and $B_{e_i} = 0$ for $m \leq i \leq n$. The claim follows straightforwardly by writing $x$ in terms of $e_i$s in $(V,\circ)$.
 \end{proof}
\subsection{Some combinatorial results}
In this section we will examine some combinatorial aspects of our topic, focusing on counting the number of abelian regular subgroups of the affine group which are useful in cryptographic contexts. In the next result we will count them in terms of points of a given geometric variety. Let $T_\circ$ be as in Proposition~\ref{rm:comb}. For each $1 \leq i \leq m$ we will denote the entries in the matrix $M_{e_i}$ in the following way:
\begin{equation}\label{eq:gammai}
M_{e_i}=\left(\begin{array}{cccc}
 & b^{(i)}_{1,1}&\dots&b^{(i)}_{1,d}\\
 1_{m}  & \vdots& &\vdots\\

 & b^{(i)}_{m,1}&\dots&b^{(i)}_{m,d}\\
 &&&\\
0 & &1_{d}& \\
 &&&\end{array}\right).
\end{equation}
In what follows, in order to keep the notation more compact, given a positive integer $s$ we will denote by $[s]$ the set $\{1,\ldots,s\}$.

\begin{theorem}\label{lm:numero}
Let $d \geq 1$.
The number of elementary abelian regular subgroups $T_\circ < \AGL(V,+)$ such that $\dim (W(T_\circ))=d$ and $T_+ < \AGL(V,\circ)$ is
\begin{equation}\label{eq:numero}
{n\brack d}_2\cdot |\cV(\cI_{m,d})|,
\end{equation}
where $m= n-d$, $\cI_{m,d}$ is the ideal in $\FF_2\left[b^{(s)}_{i,j}\middle| {i,s\in[m],\, j\in[d]}\right]$ generated by 
$
\cS_0\cup\cS_1\cup\cS_2\cup\cS_3
$
with
\[
\begin{aligned}
\cS_0&\deq\left\{\left(b^{(s)}_{i,j}\right)^2-b^{(s)}_{i,j}\middle | i,s\in [m],j\in[d]\right\},\\
\cS_1&\deq\left\{\prod_{i=1}^{m}\prod_{j=1}^{d}\left(1+\sum_{s\in S}b^{(s)}_{i,j}\right)\middle | S\subseteq [m],S\ne \emptyset\right\},\\
\cS_2&\deq\left\{b^{(s)}_{i,j}-b^{(i)}_{s,j}\middle| i,s\in [m],j\in[d]\right\},\\
\cS_3&\deq\left\{b^{(i)}_{i,j}\middle| i\in [m],j\in[d]\right\},
\end{aligned}
\]
 $\cV(\cI_{m,d})$ is the variety of $\cI_{m,d}$ and ${n\brack d}_2\deq\prod_{i=0}^{d-1}\frac{2^{n-i}-1}{2^{d-i}-1}$ is the Gaussian binomial.
\end{theorem}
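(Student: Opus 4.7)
The approach is to decompose the count as (number of $d$-dimensional subspaces $W$ that can serve as $U(T_\circ)$) times (number of valid $T_\circ$ with that specific $U$-space), giving the two factors ${N\brack d}_2$ and $|\cV(\cI_{n,d})|$ respectively.

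For the first factor, I would argue by conjugation. For any $g\in \GL(V,+)$ the map $T_\circ\mapsto g^{-1}T_\circ g$ is a bijection on subgroups of $\AGL(V,+)$ that sends the $U$-space $W$ to $Wg$, and it preserves both ``elementary abelian regular'' and the property $T_+\subseteq\AGL(V,\circ)$. The latter holds because $\AGL(V,\circ)$ is the normalizer of $T_\circ$ in $\Sym(V)$, so the normalizer of $g^{-1}T_\circ g$ is $g^{-1}\AGL(V,\circ)g$, and $g\in\GL(V,+)$ normalizes $T_+$. Since $\GL(V,+)$ acts transitively on $d$-dimensional subspaces of $V$, the count of valid $T_\circ$ with $U(T_\circ)=W$ is independent of $W$, and there are ${N\brack d}_2$ choices for $W$. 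Thus it suffices to count valid $T_\circ$ with $U(T_\circ)=W_0:=\Span\{\be_{n+1},\dots,\be_{n+d}\}$.

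With $U(T_\circ)=W_0$ fixed, Theorem \ref{th:forma} parameterizes each $\gk_{\be_i}$ ($i\le n$) by a matrix $B_{\be_i}\in\FF_2^{n\times d}$ with entries $b^{(s)}_{i,j}$, while $B_{\be_i}=0$ for $i>n$ (since $\gt_{\be_i}=\gs_{\be_i}$ is a pure translation). I would then match the generators of $\cI_{n,d}$ to the defining properties of $T_\circ$ one by one. The set $\cS_0$ merely restricts each variable to $\FF_2$. The condition $\gt_{\be_i}^2=1_V$ reduces (since $\gk_{\be_i}^2=I$ is automatic from the block form) to $\be_i\gk_{\be_i}=\be_i$, i.e.\ the $i$-th row of $B_{\be_i}$ vanishes, which is $\cS_3$. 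Pairwise commutativity $\gt_{\be_i}\gt_{\be_j}=\gt_{\be_j}\gt_{\be_i}$ reduces by Remark \ref{rk:eiej} to matching the non-diagonal rows of the $B$'s, which is $\cS_2$. Under $\cS_2$ and $\cS_3$ the generators $\gt_{\be_i}$ are pairwise commuting involutions, so $T_\circ$ is elementary abelian; the hypotheses of Lemma \ref{lm:generatori} hold automatically from the block form, yielding transitivity, and Corollary \ref{cor:regular} then promotes this to regularity.

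The main technical step is showing that $\cS_1$ captures the condition $U(T_\circ)=W_0$ exactly (not just $\supseteq W_0$). By regularity, each element of $T_\circ$ is uniquely $\gt_\bx$ for some $\bx\in V$, and by Lemma \ref{rm:comb}, if $\bx=x_1\be_1\circ\cdots\circ x_{n+d}\be_{n+d}$ then the upper-right block of $\gk_\bx$ is $\sum_{i=1}^n x_iB_{\be_i}$. Hence $\bx\in U(T_\circ)$ iff $\sum_{i=1}^n x_iB_{\be_i}=0$, so $U(T_\circ)=W_0$ is equivalent to the $\FF_2$-linear independence of $B_{\be_1},\dots,B_{\be_n}$, i.e.\ $\sum_{s\in S}B_{\be_s}\ne 0$ for every non-empty $S\subseteq[n]$. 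Over $\FF_2$, the factor $1+\sum_{s\in S}b^{(s)}_{i,j}$ vanishes iff the $(i,j)$-entry of $\sum_s B_{\be_s}$ equals $1$, so the product $\prod_{i,j}\bigl(1+\sum_{s\in S}b^{(s)}_{i,j}\bigr)$ vanishes iff $\sum_s B_{\be_s}\ne 0$, which is exactly $\cS_1$. Combining this count with the Gaussian-binomial factor yields ${N\brack d}_2\cdot|\cV(\cI_{n,d})|$, as claimed.
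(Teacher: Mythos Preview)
Your proof is correct and follows essentially the same route as the paper's: reduce to $U(T_\circ)=\Span\{\be_{n+1},\dots,\be_{n+d}\}$ by conjugation in $\GL(V,+)$, parameterize via Theorem~\ref{th:forma}, and then match $\cS_0$--$\cS_3$ to the defining constraints using Remark~\ref{rk:eiej}, Lemma~\ref{lm:generatori}, Corollary~\ref{cor:regular}, and Lemma~\ref{rm:comb}. Your phrasing of the $\cS_1$ condition as ``$\FF_2$-linear independence of $B_{\be_1},\dots,B_{\be_n}$'' is a crisp restatement of the paper's $\gk_\bx\ne 1_V$ argument, and your conjugation argument for the Gaussian-binomial factor (conjugation by $g\in\GL(V,+)$ normalizes $T_+$ and hence preserves $T_+\subseteq\AGL(V,\circ)$) is the same content as the paper's use of Remark~\ref{rm:glonly}, just packaged slightly differently.
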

\begin{proof}
The claim follows by applying together Theorem~\ref{th:forma} and Theorem~\ref{cor:identitasotto}.
Let us start by computing the number of the groups as in Theorem \ref{th:forma}, and then all the conjugates one can obtain from these. Notice that a group $T_\circ < \AGL(V,+)$ such that $W(T_\circ)$ is generated by the last $d$ vectors of the canonical basis of $V$ and such that $T_+ < \AGL(V,\circ)$ is determined if the matrices $M_{e_1},\dots,M_{e_m}$ (and so, equivalently, $B_{e_1},\dots,B_{e_m}$) are individuated, since $M_{e_i} = 1_n$ for the remaining $m < i \leq n$.
We will show that to each set of admissible matrices $\{B_{e_1},\dots,B_{e_m}\}$ corresponds one point in $\cV(\cI_{m,d})$ and vice versa, from a point of $\cV(\cI_{m,d})$ we can obtain one set of admissible matrices $\{B_{e_1},\dots,B_{e_m}\}$.
Let $T_\circ < \AGL(V,+)$ be such that $W(T_\circ)$ is generated by the last $d$ vectors of the canonical basis of $V$ and such that $T_+ < \AGL(V,\circ)$. Let us denote by $\{M_{e_1},\dots,M_{e_m}\}$ the matrices defining the operation. If $\emptyset \ne S \subseteq [m]$ and $x=\displaystyle\underset{i\in S}{\bigcirc}e_i$, then, from Proposition~\ref{rm:comb},
\[
M_x=\left(\begin{array}{cccc}
 & \sum_{s\in S}b^{(s)}_{1,1}&\dots& \sum_{s\in S}b^{(s)}_{1,d}\\
 1_{m}  & \vdots& &\vdots\\

 &  \sum_{s\in S}b^{(s)}_{m,1}&\dots& \sum_{s\in S}b^{(s)}_{m,d}\\
  &&&\\
 0& &1_{d}& \\
 &&&\end{array}\right).
\]
Since $M_x\ne 1_V$, then there exist $i,j$ such that 
\[
\displaystyle\sum_{s\in S}b^{(s)}_{i,j}=1,
\]
which happens if and only if
\[
\prod_{i=1}^{m}\prod_{j=1}^{d}\left(1+\sum_{s\in S}b^{(s)}_{i,j}\right)=0.
\]
For simmetry we also have that the conditions given by set $\cS_2$ hold. Moreover, since $e_i$ is fixed from $M_{e_i}$, we also obtain a solution for set $\cS_3$. To conclude, $\cS_0$ is trivially satisfied, since the matrices are binary.\\
Vice versa, from a solution of the ideal $\cI_{m,d}$, we can construct $B_{e_1},\dots,B_{e_m}$  as in Eq.~\eqref{eq:gammai}. Consequently, we can consider the group $\T$ generated by the affine maps $\gt_{e_i}\deq M_{e_i}\gs_{e_i}$ for $1 \leq i \leq n$, where for $1\le i\le m$
\[
M_{e_i}\deq \begin{pmatrix}
1_{m} & B_{e_i}\\
0 & 1_{d} \end{pmatrix} 
\]
 and $M_{e_i}\deq 1_{n}$ for $m< i\le n$.
Since the conditions of Lemma \ref{lm:generatori} are satisfied, $\T$ is transitive, and it is abelian from the condition expressed by set $\cS_2$.
Moreover, if $x\in V$ and $1\leq i\leq m$, then
\[
x\gt_{e_i}^2=(xM_{e_i}^2+e_iM_{e_i}+e_i).
\]
Computing $M_{e_i}^2$ we obtain 
\[
M_{e_i}^2=\begin{pmatrix}
1_{m} & B_{e_i}+B_{e_i}\\
0 & 1_{d} \end{pmatrix}=1_{n}.
\]
Hence,  since from the condition given by set $\cS_3$ we obtain $e_iM_{e_i}=e_i$, and so $\gt_{e_i}^2=1_V$, i.e. $\T$ is elementary. Moreover, $\T$ is regular, 
since it is abelian and transitive.\\
This shows a one-to-one correspondence between the points of $\cV(\cI_{m,d})$ and the subgroups $T_\circ < \AGL(V,+)$ such that $W(T_\circ)=\Span\{e_{m+1},\dots,e_{n}\}$ and $T_+ <\AGL(V,\circ)$.
To conclude, consider a $d$-dimensional vector subspace $\overline{W} < V$ and let $\Delta=|\cV(\cI_{m,d})|$. Let us denote by $\T_1,\dots,\T_\Delta$ the distinct elementary abelian regular groups such that $W(\T_i)=\Span\{e_{m+1},\dots,e_{n}\}$ and let $g\in\GL(V,+)$ be a transformation such that $\overline{W}g=\Span\{e_{m+1},\dots,e_{n}\}$.
Then the groups $(\T_1)^{g^{-1}},\dots,(\T_\Delta)^{g^{-1}}$ are pairwise distinct and $W((\T_i)^{g^{-1}})=\overline{W}$ for each $1 \leq i \leq \Delta$. Now, let $T_\diamond$ be an elementary abelian regular subgroup such that $W(T_\diamond)=\overline{W}$. 
We have $W((T_\diamond)^g)=W(T_\diamond)g=\Span\{e_{m+1},\dots,e_{n}\}$, which implies $(T_\diamond)^g=\T_i$ for some $i$, and so $T_\diamond=(\T_i)^{g^{-1}}$. Our claim follows from the fact that the number of $d$-dimensional vector subspaces of an $n$-dimensional vector space over $\mathbb{F}_2$ is ${n\brack d}_2$.
 \end{proof} 
In the next result, we give an upper bound on the number of points of the variety $\cV(\cI_{m,d})$ defined in Theorem~\ref{lm:numero}. A lower bound to $|\cV(\cI_{m,d})|$ has been given in~\cite{brunetta2019hidden}, where it is also shown that the upper bound of Theorem~\ref{prop:bound} is tight.

\begin{theorem}\label{prop:bound}
Let $\cI_{m,d}$ be defined as in Theorem \ref{lm:numero}. Then
\[
 |\cV(\cI_{m,d})|\le 2^{d\frac{m(m-1)}{2}}-1-\sum_{r=1}^{m-2}{m\choose r}\left(2^{d}-1\right)^{\binom{m-r}{2}}.
\]
\end{theorem}
\begin{proof}
Let
$
\overline{B}=(b_1^{(1)},\dots,b_m^{(1)},b_1^{(2)},\dots,b_m^{(2)},\dots,b_1^{(m)},\dots,b_m^{(m)})\in\cV(\cI_{m,d}),
$
where $b_i^{(s)}=(b_{i,1}^{(s)},\dots b_{i,d}^{(s)})\in (\FF_2)^d$ for all $i,j$ as in \eqref{eq:gammai}, i.e. $b_i^{(s)}$ is the $i$-th row of the matrix $B_{e_s}$. \\
We aim at counting how many vectors $\overline{B}$ satisfy the constrains of set $\cS_1$, $\cS_2$ and $\cS_3$ as in Theorem~\ref{lm:numero}. We proceed in two steps: 
we consider first all the solutions for $\cS_2$ and $\cS_3$ and then we exclude some of those for which the equations of $\cS_1$ are not satisfied.\\

{\bf 	\noindent First step.} As already pointed out before Proposition~\ref{rm:comb}, from the conditions in $\cS_3$ we have $b_i^{(i)}=0$ for all $i$, and from those in $\cS_2$, $\bb_j^{(i)}=\bb_i^{(j)}$ for all $i,j$. Therefore, the matrix $B_{e_1}$ is determined only by the rows $\bb_2^{(1)},\dots,\bb_m^{(1)}$,  being its first row equal to zero. Analogously, $B_{\be_2}$ is determined only by the rows $\bb_3^{(2)},\dots,\bb_m^{(2)}$ and by $\bb_2^{(1)}$, since the first row of $B_{\be_2}$ is equal to the second row of $B_{\be_1}$ and since the second row of $B_{\be_2}$ equal to zero. Iterating this argument we can consider only the vector composed as
\[
B=(\underbrace{\bb_2^{(1)},\dots,\bb_m^{(1)}},\underbrace{\bb_3^{(2)},\dots,\bb_m^{(2)}},\dots,\underbrace{\bb_{m-1}^{(m-2)},\bb_m^{(m-2)}},\underbrace{\bb_m^{(m-1)}})
\]
%note that the $n$-th matrix $B_{\be_n}$ is defined by the $n$-th rows of the other $n-1$ matrices. 
and thus we have $2^{d\frac{m(m-1)}{2}}$ solutions to the equations in $\cS_2\cup\cS_3$.\\

{\bf \noindent Second step.} The entries of $\bB$ must satisfy also the constrains given by $\cS_1$, so for any subset $S 	\subset [m]$ we can exclude the cases where
 \[
\begin{cases}
B_{\be_i}=0 &\text{if $i \in S$}\\
B_{\be_i}\ne 0 &\text{if $i \notin S$}.
 \end{cases}
 \]
In particular, we count when the entries of the matrices $B_{\be_i}$  with $i\in S$ are all zeros and the remaining entries of the matrices $B_{\be_i}$ with $i\notin S$ are all non-zero.
We start considering those vectors $\bB$ obtained when exactly one $B_{\be_i}$ is zero and others are non-zero, that is, we consider any set $S$ with one element. In this case $n-1$ entries  of $\bB$ are zero and the others are all non-zero. Similarly, if any pair $(B_{\be_s},B_{\be_t})$ is equal to zero and the others are not, then $m-1+m-2$ entries of $\bB$ are zero and the others are all non-zero. Indeed, assuming $s<t$, the zero entries of $\bB$ must be $\bb_s^{(1)},...,\bb_s^{(s-1)}, \bb_{s+1}^{(s)}, ...,\bb_m^{(s)}$ in order to have $B_{\be_s}=0$, and $\bb_t^{(1)},...,\bb_t^{(t-1)}, \bb_{t+1}^{(t)}, ...,\bb_m^{(t)}$ in order to have  $B_{\be_t}=0$. Considering that $\bb_t^{(s)}$ is already zero, we have that $m-1+m-2$ entries of $\bB$ are zero. Iterating this argument, if we assume that $r$ matrices are zero, then $\sum_{i=1}^r(m-i)$  entries of $\bB$ are zero and the others are all non-zero. Then such $r$ matrices can be chosen in $\binom{m}{r}$ possible ways and any time $2^d-1$ non-zero elements may be used to fill each of the other entries of $\bB$, that are 
\[
\begin{aligned}
\frac{m(m-1)}{2}-\sum_{i=1}^r (m-i)&=\binom{m}{2}-\sum_{i=m-r}^{m-1} i\\
&=\binom{m}{2}-\sum_{i=1}^{m-1} i+\sum_{i=1}^{m-r-1} i\\
&=\binom{m}{2}-\binom{m}{2}+\binom{m-r}{2}\\
&=\binom{m-r}{2}.
\end{aligned}
\]
The last case is when $m-1$ matrices $B_{\be_i}$ are zero. By the conditions of $\cS_2\cup\cS_3$ also the last one is zero, and this happens only when $\bB$ is zero.
This concludes the proof.
 \end{proof}

The following results are derived from Theorem~\ref{lm:numero} and are related to the special cases when $\dim(W(T_\circ)) \in \{n-3, n-2\}$. Notice that the case $\dim(W(T_\circ)) = n-2$ has been largely considered in \cite{civino2019differential}, where it has been used to perform a differential attack against a block cipher. The same notation as in Theorem~\ref{lm:numero} in used. Recall that if $\T = T_\circ$, from Proposition~\ref{lm:forma}, the hypothesis $T_\circ < \AGL(V,+)$ is enough to guarantee that  $T_+ < \AGL(V,\circ)$, and so also Theorem~\ref{th:forma} applies.

\begin{corollary}\label{cor:cod3}
There exist \[{n\brack n-3}_2 \cdot \left(2^{3(n-3)}-7(2^{n-3}-1)-1\right)\] distinct elementary abelian regular groups $\T < \AGL(V)$ such that $\dim(W(\T))=n-3$.
\end{corollary}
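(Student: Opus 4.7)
The plan is to instantiate Theorem \ref{lm:numero} at $n=3$, $d=N-3$, and then compute $|\cV(\cI_{3,N-3})|$ explicitly. First I would observe that the hypothesis $T_+\subseteq\AGL(V,\circ)$ of Theorem \ref{lm:numero} is automatic in this regime: since the co-dimension of $U(T_\circ)$ is $n=3\le 5$, Proposition \ref{lm:forma} guarantees that every elementary abelian regular subgroup $T_\circ\subseteq\AGL(V,+)$ with $\dim U(T_\circ)=N-3$ already satisfies $T_+\subseteq\AGL(V,\circ)$. Thus Theorem \ref{lm:numero} enumerates \emph{all} such subgroups, and the count equals ${N\brack N-3}_2\cdot|\cV(\cI_{3,N-3})|$.

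Next, following the parameterization used inside the proof of Proposition \ref{prop:bound}, I would reduce the constraints $\cS_2\cup\cS_3$ to a free triple $(\ba,\bb,\bc)\in(\FF_2^{N-3})^3$, where $\ba=\bb_2^{(1)}$, $\bb=\bb_3^{(1)}$, $\bc=\bb_3^{(2)}$ are the three genuinely free off-diagonal rows, giving $B_{\be_1}=(\mathbf 0,\ba,\bb)^{\top}$, $B_{\be_2}=(\ba,\mathbf 0,\bc)^{\top}$, $B_{\be_3}=(\bb,\bc,\mathbf 0)^{\top}$ and a total of $2^{3(N-3)}$ candidate triples. Then I would impose $\cS_1$ by inclusion--exclusion on the seven ``failure'' events $A_S=\{(\ba,\bb,\bc):\sum_{s\in S}B_{\be_s}=0\}$, indexed by the non-empty subsets $S\subseteq\{1,2,3\}$.

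A direct inspection shows that each $A_S$ is a linear subspace of $(\FF_2^{N-3})^3$ of size $2^{N-3}$. The main obstacle, in my view, is verifying that \emph{every} one of the $\binom{7}{2}$ pairwise intersections $A_S\cap A_{S'}$ collapses to $\{(\mathbf 0,\mathbf 0,\mathbf 0)\}$; this reduces to checking, case by case, that any two distinct relations among the seven already force $\ba=\bb=\bc=0$. Once this is established, every higher-order intersection is also trivial, so inclusion--exclusion yields $\bigl|\bigcup_{S}A_S\bigr|=7\cdot 2^{N-3}+\sum_{k=2}^{7}(-1)^{k+1}\binom{7}{k}=7\cdot 2^{N-3}-6$, where the tail sum is evaluated via $\sum_{k=0}^{7}(-1)^k\binom{7}{k}=0$. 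Subtracting from the total $2^{3(N-3)}$ and regrouping gives $|\cV(\cI_{3,N-3})|=2^{3(N-3)}-7(2^{N-3}-1)-1$, and multiplying by the Gaussian binomial ${N\brack N-3}_2$ produces the stated formula.
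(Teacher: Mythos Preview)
Your proposal is correct and follows essentially the same route as the paper: both reduce via Theorem~\ref{lm:numero} to counting triples $(\bb_2^{(1)},\bb_3^{(1)},\bb_3^{(2)})\in(\FF_2^{N-3})^3$, list the seven ``bad'' events indexed by nonempty $S\subseteq\{1,2,3\}$, observe that each has size $2^{N-3}$ and that any two meet only in the zero triple, and then subtract. The only differences are cosmetic: you invoke Proposition~\ref{lm:forma} explicitly to drop the hypothesis $T_+\subseteq\AGL(V,\circ)$ (the paper defers this to a remark afterward), and you phrase the final subtraction as a formal inclusion--exclusion where the paper simply writes $2^{3(N-3)}-7(2^{N-3}-1)-1$ directly.
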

\begin{proof}
Proceeding as in Theorem \ref{lm:numero}, we need to compute the number of groups $\T$ such that $W(\T)=\Span\{\be_4,\dots,\be_n\}$.
Using the notation as in Theorem~\ref{prop:bound}, we have%From Lemma \ref{lm:forma} and Remark \ref{rk:eiej} we have

\begin{eqnarray*}
\gk_{\be_1}=\left(\begin{array}{ccc|c}
1&0&0&0\\
 &1&0&\bb^{(1)}_2\\
 & &1&\bb^{(1)}_3\\
\hline
  & & &  1_{n-3}  \end{array}\right)
,&
  \gk_{\be_2}=\left(\begin{array}{ccc|c}
1 &0&0&\bb^{(1)}_2\\
 &1&0&0\\
 & &1&\bb^{(2)}_3\\
\hline
  & & &  1_{n-3} 
\end{array}\right)& \\
\gk_{\be_3}=\left(\begin{array}{ccc|c}
1& 0&0&\bb^{(1)}_3\\
 &1&0&\bb^{(2)}_3\\
 & &1&0\\
 \hline
  & & & 1_{n-3}  \end{array}\right)
,&
  \gk_{\be_1}  \gk_{\be_2}=\left(\begin{array}{ccc|c}
1 &0&0&\bb^{(1)}_2\\
 &1&0&\bb^{(1)}_2\\
 & &1&\bb^{(1)}_3+\bb^{(2)}_{3}\\
 \hline
  & & & 1_{n-3}  \end{array}\right)
 & \\
   \gk_{\be_1}  \gk_{\be_3}=\left(\begin{array}{ccc|c}
1 &0&0&\bb^{(1)}_{3}\\
 &1&0&\bb^{(1)}_{2}+\bb^{(2)}_{3}\\
 & &1&\bb^{(1)}_{3}\\
\hline
  & & &  1_{n-3} \end{array}\right)
,&
  \gk_{\be_2}  \gk_{\be_3}=\left(\begin{array}{ccc|c}
1 &0&0&\bb^{(1)}_{2}+\bb^{(1)}_{3}\\
 &1&0&\bb^{(2)}_{3}\\
 & &1&\bb^{(2)}_{3}\\
\hline
 & & & 1_{n-3}  \end{array}\right)&
 \\
  \gk_{\be_1}   \gk_{\be_2}  \gk_{\be_3}=\left(\begin{array}{ccc|c}
1 &0&0&\bb^{(1)}_{2}+\bb^{(1)}_{3}\\
 &1&0&\bb^{(1)}_{2}+\bb^{(2)}_{3}\\
 & &1&\bb^{(1)}_{3}+\bb^{(2)}_{3}\\
 \hline
  & & & 1_{n-3} \end{array}\right).
\end{eqnarray*}
The following possibilities need to be ruled out:
\begin{enumerate}
\item $\gk_{\be_1}=1_n$ $\Leftrightarrow$ $\bb^{(1)}_2=0$ and $\bb^{(1)}_3=0$,
\item $\gk_{\be_2}=1_n$ $\Leftrightarrow$  $\bb^{(1)}_2=0$ and $\bb^{(2)}_3=0$,
\item $\gk_{\be_3}=1_n$ $\Leftrightarrow$  $\bb^{(1)}_3=0$ and $\bb^{(2)}_3=0$,
\item $  \gk_{\be_1}   \gk_{\be_2}=1_n$ $\Leftrightarrow$  $\bb^{(1)}_2=0$ and $\bb^{(1)}_3=\bb^{(2)}_3$,
\item $  \gk_{\be_1}   \gk_{\be_3}=1_n$ $\Leftrightarrow$ $\bb^{(1)}_3=0$ and $\bb^{(1)}_2=\bb^{(2)}_3$,
\item $\gk_{\be_2}  \gk_{\be_3}=1_n$ $\Leftrightarrow$ $\bb^{(1)}_2=\bb^{(1)}_3$ and $\bb^{(2)}_3=0$,
\item $  \gk_{\be_1}   \gk_{\be_2}  \gk_{\be_3}=1_n$ $\Leftrightarrow$ $\bb^{(1)}_2=\bb^{(1)}_3$, $\bb^{(1)}_2=\bb^{(2)}_3$ and $\bb^{(1)}_3=\bb^{(2)}_3$.
\end{enumerate}
Therefore we obtain that $2^{3(n-3)}-7(2^{n-3}-1)-1$ is the number of distinct subgroups $\T$ such that $W(\T)=\Span\{\be_4,\dots,\be_n\}$.
\end{proof}
\begin{corollary}\label{cor:cod2}
There exist $${n \brack n-2}_2 \cdot (2^{n-2}-1)$$ distinct elementary abelian regular groups $\T < \AGL(V)$ such that $\dim(W(\T))=n-2$.
\end{corollary}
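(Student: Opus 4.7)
The plan is to apply Theorem \ref{lm:numero} directly with $d=N-2$ and $n=2$, mimicking the computation in the proof of Corollary \ref{cor:cod3} but in the much simpler setting where only two generators $\gk_{\be_1},\gk_{\be_2}$ need to be tracked. By Theorem \ref{lm:numero} the count equals ${N\brack N-2}_2\cdot|\cV(\cI_{2,N-2})|$, so it suffices to show $|\cV(\cI_{2,N-2})|=2^{N-2}-1$.

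First, I would use the constraints $\cS_2$ and $\cS_3$ to reduce the free parameters. The condition $b_{i,j}^{(i)}=0$ in $\cS_3$ forces $\bb_1^{(1)}=0$ and $\bb_2^{(2)}=0$, while the symmetry relations in $\cS_2$ give $\bb_2^{(1)}=\bb_1^{(2)}$. Hence the entire datum is determined by a single vector $\bb:=\bb_2^{(1)}\in(\FF_2)^{N-2}$, and the matrices take the form
\[
\gk_{\be_1}=\left[\begin{array}{ccc}1&0&0\\ &1&\bb\\ & &I_{N-2}\end{array}\right],\qquad
\gk_{\be_2}=\left[\begin{array}{ccc}1&0&\bb\\ &1&0\\ & &I_{N-2}\end{array}\right].
\]

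Next, I would impose the constraints from $\cS_1$, which rule out precisely those choices of $\bb$ for which $\prod_{i\in S}\gk_{\be_i}=1_V$ for some nonempty $S\subseteq\{1,2\}$. The three cases are $\gk_{\be_1}=1_V$, $\gk_{\be_2}=1_V$, and $\gk_{\be_1}\gk_{\be_2}=1_V$. The first two each require $\bb=0$; a direct block multiplication shows that $\gk_{\be_1}\gk_{\be_2}$ has both relevant rows equal to $\bb$, so this product equals $1_V$ again exactly when $\bb=0$. Thus the unique forbidden value is $\bb=0$, leaving $2^{N-2}-1$ admissible vectors and hence $|\cV(\cI_{2,N-2})|=2^{N-2}-1$.

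Multiplying by the Gaussian binomial ${N\brack N-2}_2$, which counts the possible choices of the $(N-2)$-dimensional subspace $U(T_\circ)$ as in the conclusion of Theorem \ref{lm:numero}, yields the claimed formula. There is no serious obstacle here: the argument is entirely analogous to Corollary \ref{cor:cod3}, only shorter, and the small amount of case analysis collapses because with only two generators every nontrivial product reduces to checking whether the single free vector $\bb$ is zero.
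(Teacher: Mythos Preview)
Your proposal is correct and follows essentially the same approach as the paper: reduce via Theorem~\ref{lm:numero} (equivalently, the notation of Proposition~\ref{prop:bound}) to the single free vector $\bb=\bb_2^{(1)}\in(\FF_2)^{N-2}$ and observe that the only excluded value is $\bb=0$. The paper's own proof is terser---it simply invokes the $\bB=(\bb_2^{(1)})$ description and the condition $\bB\ne 0$---but your more explicit verification of the three $\cS_1$ cases is exactly the computation underlying that remark.
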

\begin{proof}
The proof is obtained using the same argument as in Corollary~\ref{cor:cod3}.
 \end{proof}
Let us now prove that the groups of Corollary~\ref{cor:cod2} belong to the same conjugacy class under $\GL(V)$.
\begin{proposition}
Let $\T$ and $\T'$ elementary abelian regular subgroups of $\AGL(V,+)$ such that $\dim(W(\T))=\dim(W(\T'))=n-2$. Then, there exists $g \in \GL(V)$ such that $\T' = \T^g$.
\end{proposition}
\begin{proof}
It is enough to prove the claim for $\T$ and $\T'$ elementary abelian regular subgroups of $\AGL(V,+)$ such that $W(\T)=W(\T')=\Span\{\be_3,\dots,\be_n\}$. Recall that such groups are defined by the corresponding $(n-2)$-dimensional vectors, as shown in the proof of Theorem~\ref{prop:bound}. Let us denote $\T=\langle \gt_{\be_1},\dots,\gt_{\be_n}\rangle$ and $\T'=\langle \gt_{\be_1}',\dots,\gt_{\be_n}'\rangle$, whose matrices are respectively individuated by the vectors  
\[{B}=\left(b^{(1)}_{2,1},\dots,b^{(1)}_{2,n-2}\right) \text{ and } {B}'=\left(b'^{(1)}_{2,1},\dots,b'^{(1)}_{2,n-2}\right).\]

Let us assume first that $B$ and $B'$ have the same Hamming weight, i.e. the same number of non-zero coordinates. In this case there exists a permutation matrix $P\in(\FF_2)^{(n-2)\times (n-2)}$ such that ${B}P={B}'$. Let $P'\in(\FF_2)^{n\times n}$ be the permutation matrix defined as
\[
P'\deq \left(\begin{array}{ccccc}
1&0&0&\dots&0\\
0&1&0&\dots&0\\
0&0&& & \\
\vdots&\vdots&&\,P&\\
0&0&& & \end{array}\right).
\]
Note that when we multiply a matrix $M$ by $P'$ on the right we are permuting the last $n-2$ columns of $M$. On other hand, multiplying $M$ by $P'^{-1}$ on the left we are permuting the last $n-2$ rows of $M$.
Hence, we have 
\[
P'^{-1}\gt_{\be_i}P'=P'^{-1}\gk_{\be_i}P'\gs_{{\be_i}P'}=\gt_{\be_iP'}'=\gt_{\be_{i\pi}}'
\]
 where $\pi$ is the index permutation induced by $P'$, thus $P'^{-1}\T P'=\T'$. This implies that two groups corresponding to vectors with the same weight are conjugated. 
 
 Let us now assume that 
\[
B=(\underbrace{1,\dots,1}_i,0,\dots,0) \mbox{ and } B'=(\underbrace{1,\dots,1}_{i+1},0,\dots,0),
\]
for some $1 \leq i \leq n-3$.
Let $P\in(\FF_2)^{n\times n}$ be the matrix whose $j$-th row $P_j=\be_j$ if $j\ne i+2$ and $P_{i+2}=\be_{i+2}+\be_{i+3}$, i.e.
\[
P\deq\left(\begin{array}{cccccc}
1         & 0        &        & 0 & \dots & 0 \\
0         & 1        &        & 0 & \dots & 0\\
\vdots & \vdots &        &    &          & 0\\
0         & \dots   & 1     & 1 & \dots & 0\\
0         & \dots        & 0     & 1 & \dots &0 \\
0        & 0         &         &     & \dots  &1 \end{array}\right).
\]
Note that $P^{-1}=P$. Note also that multiplying a matrix $M$ by $P$ on the right we are updating its $(i+3)$-th column by summing up its  $(i+2)$-th and $(i+3)$-th columns. On the other hand, multiplying a matrix $M$ by $P^{-1}=P$ on the left we are updating its $(i+2)$-th row by summing up its  $(i+2)$-th and $(i+3)$-th rows. Therefore
\[
P\gt_{\be_j}P=P\gk_{\be_j}P\gs_{{\be_j}P}=\gt_{\be_j}'
\]
 for $j\ne i+2$ and
 \[
 P\gt_{(\be_{i+2}+\be_{i+3})}P=\gt_{\be_{i+2}}'.
 \]
 Notice that the group
 $$\langle \gt_{\be_1},\dots,\gt_{\be_{i+1}},\gt_{(\be_{i+2}+\be_{i+3})},\gt_{\be_{i+3}},\dots,\gt_{\be_n}\rangle $$ is exactly $\T$, as $\gt_{(\be_{i+2}+\be_{i+3})}\gt_{\be_{i+3}}=\gt_{\be_{i+2}}$. Therefore $P\T P=\T'$. 
 We have also proved that, if $B$ and $B'$ are such that the difference of their Hamming weights is one, by arguments previously used, the associated groups $\T$ and $\T'$ are conjugated in $\GL(V)$. 

To conclude, let us address the general case, i.e.\ the case of two groups obtained by two vectors $B$ and $B'$ having Hamming weight $d_1$ and $d_2$. Let us assume, without loss of generality, $d_1<d_2$. Let us define
\[
B_0\deq(\underbrace{1,\dots,1}_{d_1},0,\dots,0),\,B_1\deq(\underbrace{1,\dots,1}_{d_1+1},0,\dots,0),
\]
\[
\dots,\,B_{d_2-d_1}\deq(\underbrace{1,\dots,1}_{d_2},0,\dots,0),
\]
and denote by $\T(B_0),\T(B_1),\dots,\T(B_{d_2-d_1})$ the corresponding groups. Reasoning as above, we have that $\T$ and $\T(B_0)$ are conjugated in $\GL(V)$ since $B$ and $B_0$ have the same Hamming weight, and the same can be proved for $\T'$ and $\T(B_{d_2-d_1})$. Moreover, from a previous argument $\T(B_i)$ is conjugated in $\GL(V)$ to $\T(B_{i+1})$, for each $0\le i\le d_2-d_1-1$. Therefore, $\T$ and $\T'$ are conjugated in $\GL(V)$, which is our claim.
%  In conclusion, iterating the process above, for each admissible choice of $B$ and $B'$ we prove that $\T$ and $\T'$ are conjugated in $\GL(V)$.
 \end{proof}
\subsection{Conjugacy classes in low dimension}
In this last section we will focus on spaces with low dimension, i.e. with dimension up to 6. 
From Proposition \ref{lm:forma} we obtain the following corollary.
\begin{corollary}\label{th:dim6}
If $\dim(V)\le 6$, then $T_+\subseteq \AGL(V,\circ)$ if and only if $T_\circ\subseteq \AGL(V,+)$.
\end{corollary}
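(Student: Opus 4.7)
The plan is to reduce both directions of the equivalence to Proposition~\ref{lm:forma}, exploiting the fact that its hypothesis treats the two vector space structures $(V,+)$ and $(V,\circ)$ symmetrically.

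For the direction $T_\circ \subseteq \AGL(V,+) \Rightarrow T_+ \subseteq \AGL(V,\circ)$, the case $T_\circ = T_+$ is trivial, so I would assume $T_\circ \ne T_+$. First I would invoke Proposition~\ref{prop:intmax} to obtain $1 \le \dim U(T_\circ) \le N-2$. Writing $N = n+d$ with $d=\dim U(T_\circ)$, this forces $2 \le n \le N-1 \le 5$, exactly the numerical hypothesis of Proposition~\ref{lm:forma}. Applying that proposition yields $T_+ \subseteq \AGL(V,\circ)$ at once.

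For the direction $T_+ \subseteq \AGL(V,\circ) \Rightarrow T_\circ \subseteq \AGL(V,+)$, I would swap the roles of $+$ and $\circ$ and re-apply Proposition~\ref{lm:forma}. The operation $\circ$ supplies its own vector space structure on $V$, its own ``standard'' affine group $\AGL(V,\circ)$, and its own invariant subspace $U'(T_+)=\{\ba \in V \mid \gt_\ba \in T_+\}$ associated to the candidate regular subgroup $T_+$. A brief bookkeeping step shows that $U'(T_+)=U(T_\circ)$: both sets equal $\{\ba \in V \mid \gs_\ba = \gt_\ba\}$, because membership of $\gs_\ba \in T_\circ$ (respectively $\gt_\ba \in T_+$) forces the defining vectors to match by evaluation at $0$. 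Hence the same dimension bound $2 \le n \le 5$ holds with respect to the $\circ$-structure, and the symmetric form of Proposition~\ref{lm:forma} delivers $T_\circ \subseteq \AGL(V,+)$.

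The main (and essentially only) obstacle is justifying the ``swapped roles'' invocation of Proposition~\ref{lm:forma}. This is legitimate because the statement singles out $+$ only nominally: its hypothesis and conclusion depend on the two operations solely through their status as vector space structures on the same set $V$, the elementary abelian regularity of the associated translation groups, and the numerical range $2 \le n \le 5$. Once this symmetry is articulated, both implications follow from a single pair of applications of Proposition~\ref{lm:forma}, with the assumption $\dim V \le 6$ entering precisely to keep $n$ inside the admissible window.
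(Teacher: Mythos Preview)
Your proposal is correct and follows essentially the same approach as the paper, which simply records the corollary as an immediate consequence of Proposition~\ref{lm:forma} without further detail. You have made explicit what the paper leaves implicit: the bound from Proposition~\ref{prop:intmax} that forces $2\le n\le 5$ when $N\le 6$, and the symmetry argument (together with the identification $U'(T_+)=U(T_\circ)=\{\ba\mid\gs_\ba=\gt_\ba\}$) that yields the converse implication by interchanging the roles of $+$ and $\circ$.
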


The bound of the previous result is tight, as shown below.
\begin{proposition}
Let $V$ be such that $\dim(V)\ge 7$. Then there exists an elementary abelian regular subgroup $T_\circ < \AGL(V,+)$ such that $\AGL(V,\circ)$ does not contain $T_+$.
\end{proposition}
\begin{proof}
Let $n \geq 7$ be the dimension of $V$. If $n > 7$, let us decompose $V$ as $V=V_1\oplus V_2$, where 
\[
V_1\deq\Span\{ \be_1,\be_2,\be_3,\be_4,\be_5,\be_6,\be_7\}
\]
and 
\[
V_2\deq\Span\{ \be_8, \dots,\be_{n}\},
\]
otherwise we consider only $V_1$. 
Let us impose over $V_1$ the algebra structure induced by the exterior algebra over a vector space of dimension $3$, which is the one defined by
\[
\be_1\wedge \be_2=\be_4, \be_1\wedge \be_3=\be_5, \be_2\wedge \be_3=\be_6, \be_1\wedge \be_2 \wedge \be_3=\be_7,
\]
and over $V_2$ the algebra structure given by the trivial product $x*y \deq 0$ for each $x,y \in V_2$.
Hence we can define the following product over $V$:
\[
v\cdot w=(v_1+v_2)\cdot(w_1+w_2)\deq(v_1\wedge w_1+v_2*w_2)=v_1\wedge w_1,
\]
 where $v_1,w_1 \in V_1$ and $v_2,w_2 \in V_2$.
 It is easy to check that $(V,+,\cdot)$ is a commutative associative $\FF_2$-algebra such that the resulting ring is radical. From Theorem~\ref{th:somme}, such an algebra corresponds to an elementary abelian regular subgroup $T_\circ$ of $\AGL(V,+)$.
The claim follows from Lemma~\ref{lm:normalizer} and from its consequences, since $\be_1\cdot \be_2 \cdot \be_3 \neq 0$.
 \end{proof}

Let us now give a classification of all the elementary abelian regular subgroups of $\AGL(V,+)$ up to dimension $6$,  considering only the relevant cases when $2 < \dim(V) \leq 6$.
The results, summarised in Table~\ref{tab:classi}, derive from Corollary~\ref{cor:cod3} and Corollary~\ref{cor:cod2} and from some computation performed using MAGMA~\cite{bosma1997magma}.
For each admissible value of $n$, we collect in Table~\ref{tab:classi} the number of conjugacy classes of elementary abelian regular subgroups $T_\circ <\AGL(V,+)$, the number of such subgroups in each class and the corresponding dimension of $W(T_\circ)$.

{
\begin{table}[h]
\centering
\small\small
\begin{tabular}{| c | c|l|c| }
  \hline
  n & \text{\# of classes}   &\text{classes size} &$\dim(W(T_\circ))$ \\
  \hline
  \multirow{2}{*}3 & \multirow{2}{*}2 &  $|\cC_1|=1$& $3$\\
		
		&& $|\cC_2|=7$& $1$\\
   \hline
  \multirow{2}{*}4 & \multirow{2}{*}2  &$|\cC_1|=1$&$4$\\
  		
		 &&$|\cC_2|=105$&$2$\\
    \hline
    \multirow{4}{*} 5 &\multirow{4}{*} 4 & $|\cC_1|=1$& $5$\\
  		&& $|\cC_2|=1085$& $3$\\
		&& $|\cC_3|=6510$& $2$\\
		&& $|\cC_4|=868$& $1$\\
  \hline
  \multirow{8}{*} 6 & \multirow{8}{*}8 &   $|\cC_1|=1$& $6$\\
		&& $|\cC_2|=9765$& $4$\\
		&& $|\cC_3|=234360$& $3$\\
		&& $|\cC_4|=410130$& $3$\\
		&& $|\cC_5|=820260$& $2$\\
		&& $|\cC_6|=218736$& $2$\\
		&& $|\cC_7|=54684$& $2$\\
		&& $|\cC_8|=1093680$& $2$\\
  \hline
\end{tabular}
\caption{Conjugacy classes}\label{tab:classi}
\end{table}}

\section*{Acknowledgements}
Part of the results of this paper are contained in Marco Calderini's Ph.D. thesis~\cite{calderini2015boolean}, supervised by Massimiliano Sala. 
The authors gratefully thank the referee for comments and recommendations
which helped to improve the quality of the paper.

%\bibliography{jalgebra_elementary.bib}

\end{document}